\newtheorem{thm}{Theorem}[section]
\newtheorem{prop}[thm]{Proposition}
\newtheorem{cor}[thm]{Corollary}
\theoremstyle{definition}
\theoremstyle{remark}
\newtheorem{rmk}[thm]{Remark}
\newtheorem{exam}[thm]{Example}
\numberwithin{equation}{section}
\newcommand{\Pp}{\mathbb{P}}
\newcommand{\C}{\mathbb{C}}
\newcommand{\R}{\mathbb{R}}
\newcommand{\N}{\mathbb{N}}
\newcommand{\Q}{\mathbb{Q}}
\newcommand{\Z}{\mathbb{Z}}
\newcommand{\cG}{\mathcal{G}}
\newcommand{\cH}{\mathcal{H}}
\newcommand{\cM}{\mathcal{M}}
\newcommand{\cO}{\mathcal{O}}
\newcommand{\cT}{\mathcal{T}}
\newcommand{\isomto}{\overset{\sim}{\rightarrow}}
\newcommand{\ev}{{\textup{ev}}}
\DeclareMathOperator{\Ind}{Ind}
\DeclareMathOperator{\rank}{rk}
\DeclareMathOperator{\sech}{sech}
\newcommand{\rk}{\rank}
\title[The real Coxeter toric variety of type A]
{Rational cohomology of the real Coxeter toric variety of type A}
\author{Anthony Henderson}
\address{School of Mathematics and Statistics\\
University of Sydney NSW 2006\\
Australia}
\email{anthony.henderson@sydney.edu.au}
\subjclass[2000]{Primary 14M25, 14P25; Secondary 20C30, 14L30}
\thanks{The author's research was supported by the
Australian Research Council grant DP0985184.}
\begin{document}
\begin{abstract}
The toric variety corresponding to the Coxeter fan of type A can also be described as a De~Concini--Procesi wonderful model. Using a general result of Rains which relates cohomology of real De~Concini--Procesi models to poset homology, we give formulas for the Betti numbers of the real toric variety, and the symmetric group representations on the rational cohomologies. We also show that the rational cohomology ring is not generated in degree 1.
\end{abstract}
\maketitle
\section{Introduction}
Fix a positive integer $n$. Let $N$ denote the quotient $\Z^n/\Z(1,1,\cdots,1)$, which is a free $\Z$-module of rank $n-1$. We have a representation of the symmetric group $S_n$ on $N\otimes\R=\R^n/\R(1,1,\cdots,1)$ by permuting coordinates, which is generated by reflections in the hyperplanes 
\[ \{(a_1,a_2,\cdots,a_n)\in\R^n\,|\,a_i=a_j\}/\R(1,1,\cdots,1),\text{ for }
1\leq i\neq j\leq n. \]
Let $\Delta$ be the complete fan of rational strongly convex polyhedral cones in $N\otimes\R$ defined by this hyperplane arrangement (that is, the maximal cones of $\Delta$ are the closures of the chambers of the arrangement). Then to $N$ and $\Delta$ we can associate an $(n-1)$-dimensional nonsingular projective toric variety $\cT_n$ as in \cite{fulton}. In terms of simplicial complexes, $\cT_n$ is the toric variety associated to the Coxeter complex of $S_n$. Clearly $S_n$ acts on $\cT_n$ by variety automorphisms. 

This is the $W=S_n$ case of a construction which can be carried out for any Weyl group $W$, producing the Coxeter toric variety $\cT_W$. In \cite{procesi}, Procesi gave a formula for the rational cohomology $H^*(\cT_W,\Q)$ as a graded $\Q W$-module. Further results about the representation of $W$ on $H^*(\cT_W,\Q)$ were obtained by Stembridge in \cite{stembridge1,stembridge2} and Lehrer in \cite{lehrer}. These results make use of the well-developed general theory of the cohomology of complex toric varieties.

The rational cohomology of the real variety $\cT_W(\R)$, which is not covered by any such general theory, was considered by the author and Lehrer in \cite{hendersonlehrer}. The main result of that paper was a formula for the alternating sum $\sum_i (-1)^i H^i(\cT_W(\R),\Q)$ as a virtual $\Q W$-module. The methods used there did not allow us to isolate an individual $H^i(\cT_W(\R),\Q)$.

The present paper restricts to the case $W=S_n$. As Procesi observed in \cite[Section 3]{procesi}, there is an alternative construction of the variety $\cT_n$, as a special case of what he and De~Concini later called the wonderful model of a subspace arrangement. In the terminology of \cite{dp}, if $\cG$ denotes the building set in $(\C^n)^*$ which consists of all subspaces $\C\{x_i\,|\,i\in I\}$ where $I$ is a nonempty subset of $\{1,2,\cdots,n\}$ and $x_1,x_2,\cdots,x_n$ are the coordinate functions on $\C^n$, then the projective wonderful model $Y_n=\overline{Y}_\cG$ is isomorphic to $\cT_n$. We recall the definition of $Y_n$ and explain this isomorphism further in Section 2.

Now the rational cohomology of $\overline{Y}_\cG(\R)$ may be described using the result of Rains \cite[Theorem 3.7]{rains} (or rather its dual form, \cite[Theorem 2.1]{hendersonrains}). This description in general involves the homology of the poset $\Pi_\cG^{(2)}$, consisting of all direct sums of even-dimensional elements of $\cG$; in our case, this poset is clearly isomorphic to the poset of even-size subsets of $\{1,2,\cdots,n\}$, whose homology is easy to determine. 

In Section 3 we carry out the calculation and derive the following formula. Here we let $R(S_n)$ denote the Grothendieck group of $\Q S_n$-modules and define the $\N$-graded ring $\bigoplus_{n\geq 0} R(S_n)$ using the induction product, with identity element $1=1_{S_0}$ (the trivial representation of $S_0$). We then complete this to a power series ring $R=\widehat{\bigoplus}_{n\geq 0} R(S_n)$ and adjoin an indeterminate $t$.

\begin{thm} \label{mainthm}
We have the following equality in $R[t]$:
\[
1+\sum_{n\geq 1}\sum_i H^i(\cT_n(\R),\Q)\,(-t)^i
=(\sum_{n\geq 0}1_{S_n})(1+\sum_{\substack{n\geq 2\\n\textup{ even}}}\varepsilon_{S_n}t^{n/2})^{-1}.
\]
\end{thm}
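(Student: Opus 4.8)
The plan is to combine the dual form of Rains' theorem with a Möbius-function computation on the even-subset poset, and then repackage the answer as the asserted identity in $R[t]$. First I would specialize \cite[Theorem 2.1]{hendersonrains} (the dual of \cite[Theorem 3.7]{rains}) to the coordinate building set $\cG$. Since every element of $\cG$ is a coordinate subspace $V_I=\C\{x_i\mid i\in I\}$, and a direct sum of even-dimensional such subspaces is $V_S$ with $S$ of even size, the poset $\Pi_\cG^{(2)}$ is exactly the poset of even-size subsets $S\subseteq\{1,\dots,n\}$ under inclusion; moreover the stabiliser of $V_S$ in $S_n$ is $S_S\times S_{S^c}$, where $S^c=\{1,\dots,n\}\setminus S$. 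Rains' theorem then expresses $H^i(\cT_n(\R),\Q)$, as an $S_n$-module, built degree by degree from the reduced homologies of the open intervals $(\hat0,V_S)$ in $\Pi_\cG^{(2)}$, each twisted by the sign character of $S_S$ and inflated trivially along $S_{S^c}$, and summed over $S$ by induction from $S_S\times S_{S^c}$ to $S_n$.

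Second, I would recast the target as the equivalent identity $C\cdot F=E$, where $C=1+\sum_{n\ge1}\sum_iH^i(\cT_n(\R),\Q)\,(-t)^i$, $E=\sum_{n\ge0}1_{S_n}$ and $F=1+\sum_{\substack{n\geq2\\n\textup{ even}}}\varepsilon_{S_n}t^{n/2}$. The factor $E$ then acquires a transparent meaning: in the sum over even subsets $S$ the complement $S^c$ contributes only the trivial representation $1_{S_{S^c}}$ and ranges over sets of every size, so the trivial-representation series $\sum_{r\ge0}1_{S_r}=E$ splits off as a factor. What remains is a series $G=\sum_{k\ge0}g_kt^k$ with $g_k\in R(S_{2k})$ recording the sign-twisted interval homology of the ``complete'' even-subset poset on a $2k$-element set, and the whole problem reduces to proving $G=F^{-1}$.

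Third comes the poset computation, which is the heart of the matter. The open interval $(\hat0,V_{[2k]})$ is the rank-selection of the Boolean lattice $B_{2k}$ to its proper even ranks, hence shellable, so its reduced homology is concentrated in the top degree $k-2$; this is what makes each even subset of size $2k$ contribute to a single cohomological degree and lets the factor $(-t)^i$ on the left-hand side convert each genuine homology module into the $S_{2k}$-equivariant Möbius number $\widetilde\mu_{2k}\in R(S_{2k})$. The Möbius recursion itself is the key identity: the defining relation for the equivariant Möbius function of $\Pi_\cG^{(2)}$---equivalently, the equivariant contractibility of the closed order complex $[\hat0,V_{[2k]}]$, which is a cone with apex $\hat0$---gives
\[
\sum_{j=0}^{k}\Ind_{S_{2j}\times S_{2k-2j}}^{S_{2k}}\bigl(\widetilde\mu_{2j}\boxtimes1_{S_{2k-2j}}\bigr)=0\qquad(k\ge1).
\]
In generating-function form this says precisely that $\sum_{k}\widetilde\mu_{2k}t^k$ is inverse to the trivial even series $\sum_{k}1_{S_{2k}}t^k$, the numerical shadow of which is the expansion of $\sec$.

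Finally I would pass to the sign-twisted statement. Tensoring by the sign character is a ring automorphism of $R$, since $\varepsilon_{S_{a+b}}$ restricts to $\varepsilon_{S_a}\boxtimes\varepsilon_{S_b}$; it carries $\sum_k1_{S_{2k}}t^k$ to $F$ and therefore carries its inverse to $F^{-1}$. Tracking the sign $(-1)^i$ together with the degree shift $k-2$ identifies $g_k$ with the sign-twist of $\widetilde\mu_{2k}$, so $G=F^{-1}$ and hence $C=E\,F^{-1}$, as required. The main obstacle I anticipate is not the homology of the even-subset poset, which is elementary, but the careful specialization of Rains' theorem: pinning down the precise degree shift and the sign character entering the local contribution of each interval, and verifying that the global induction and the $(-t)^i$ bookkeeping combine to give exactly the equivariant Möbius numbers with no stray signs. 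A useful check at each stage is to confirm the cases $n=1,2$ by hand, where $\cT_1(\R)$ is a point and $\cT_2(\R)=\R\Pp^1$ carries the sign representation on $H^1$.
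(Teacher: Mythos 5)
Your proposal is correct and follows essentially the same route as the paper: Rains' theorem specialized to the even-subset poset $B_n^{\ev}$, concentration of interval homology in top degree via Cohen--Macaulayness of the rank-selected Boolean lattice, and the key inversion identity, which you phrase as the equivariant M\"obius recursion but which is literally the paper's Proposition \ref{schprop} (the vanishing of the alternating sum of Whitney homologies), followed by the same sign-twist and factoring off of the trivial series $\sum_n 1_{S_n}$.
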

\noindent
Note that setting $t=1$ in Theorem \ref{mainthm} recovers \cite[(8)]{hendersonlehrer}.

An alternative way to express this formula is as follows.
\begin{cor} \label{indcor}
We have the following equality in $R(S_n)$:
\[
H^i(\cT_n(\R),\Q)=\sum_{\substack{n_1,n_2,\cdots,n_m\geq 2\\n_1,n_2,\cdots,n_m\textup{ even}\\n_1+n_2+\cdots+n_m=2i\leq n}}(-1)^{i+m}\Ind_{S_{n-2i}\times S_{n_1}\times\cdots\times S_{n_m}}^{S_n}(\varepsilon_{n_1,\cdots,n_m}),
\]
where $\varepsilon_{n_1,\cdots,n_m}$ is the linear character whose restriction to the $S_{n-2i}$ factor is trivial and whose restriction to each $S_{n_j}$ factor is the sign character.
\end{cor}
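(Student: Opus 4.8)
The plan is to derive Corollary \ref{indcor} directly from Theorem \ref{mainthm} by a generating-function coefficient extraction: I would read the displayed identity in $R[t]$ as an equality of formal series and compare the $R(S_n)$-component of the coefficient of $t^i$ on each side. The only structural facts needed are that the induction product on $R=\widehat{\bigoplus}_{n\geq 0}R(S_n)$ is associative and commutative and distributes over the sums involved, together with transitivity of induction.

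First I would write $B=1+\sum_{n\geq 2,\ n\textup{ even}}\varepsilon_{S_n}t^{n/2}$ and note that every summand of $B-1$ has $R$-degree $\geq 2$ (and $t$-degree $\geq 1$). Consequently the geometric series $B^{-1}=\sum_{m\geq 0}(-1)^m(B-1)^m$ is well-defined in $R[t]$: for fixed $n$ the $R(S_n)$-component of $(B-1)^m$ vanishes once $2m>n$, so only finitely many $m$ contribute and each graded piece is a genuine polynomial in $t$. This legitimizes the manipulation of the right-hand side of Theorem \ref{mainthm}.

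Next I would expand $(B-1)^m$ using the definition of the induction product, where each factor supplies one even part $n_j\geq 2$ with character $\varepsilon_{S_{n_j}}$ and a weight $t^{n_j/2}$:
\[
(B-1)^m=\sum_{\substack{n_1,\ldots,n_m\geq 2\\ \textup{all even}}}\Ind_{S_{n_1}\times\cdots\times S_{n_m}}^{S_{n_1+\cdots+n_m}}(\varepsilon_{S_{n_1}}\boxtimes\cdots\boxtimes\varepsilon_{S_{n_m}})\,t^{(n_1+\cdots+n_m)/2},
\]
the sum running over ordered tuples. Multiplying by $\sum_{\ell\geq 0}1_{S_\ell}$ and invoking transitivity of induction together with the observation that $1_{S_\ell}\boxtimes\varepsilon_{S_{n_1}}\boxtimes\cdots\boxtimes\varepsilon_{S_{n_m}}$ is exactly the character $\varepsilon_{n_1,\ldots,n_m}$ on $S_\ell\times S_{n_1}\times\cdots\times S_{n_m}$, I obtain the right-hand side of Theorem \ref{mainthm} as an explicit series indexed by $\ell$, $m$, and the tuple $(n_1,\ldots,n_m)$.

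Finally I would compare coefficients. On the right, isolating the $R(S_n)$-component of the coefficient of $t^i$ forces $n_1+\cdots+n_m=2i$ and $\ell=n-2i\geq 0$, and carries the sign $(-1)^m$ coming from inverting $B$; this produces $\sum(-1)^m\Ind_{S_{n-2i}\times S_{n_1}\times\cdots\times S_{n_m}}^{S_n}(\varepsilon_{n_1,\ldots,n_m})$. On the left, the coefficient of $t^i$ in $\sum_i H^i(\cT_n(\R),\Q)(-t)^i$ is $(-1)^i H^i(\cT_n(\R),\Q)$. Equating the two and multiplying through by $(-1)^i$ yields precisely the formula of Corollary \ref{indcor}. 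I expect no genuine obstacle, since all the content resides in Theorem \ref{mainthm}; the only points requiring care are the well-definedness of $B^{-1}$ in $R[t]$ and the correct combination of the two independent sources of signs, namely the $(-1)^m$ from the geometric series and the $(-1)^i$ arising from the $(-t)^i$ on the left.
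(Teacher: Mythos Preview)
Your proposal is correct and follows essentially the same approach as the paper: expand $(1+\sum_{n\geq 2,\ n\text{ even}}\varepsilon_{S_n}t^{n/2})^{-1}$ as a geometric series, write the $m$th power as a sum over ordered tuples via the induction product, multiply by $\sum_{\ell\geq 0}1_{S_\ell}$, and extract the $R(S_n)$-component of the coefficient of $t^i$. The paper records only the geometric-series expansion and leaves the final coefficient comparison implicit, whereas you spell out the well-definedness of the inverse and the sign bookkeeping, but there is no substantive difference in method.
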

\noindent
Corollary \ref{indcor} answers the question posed after \cite[Theorem 6]{hendersonlehrer} in the negative.
Another consequence of Theorem \ref{mainthm} is a formula for the Betti numbers of $\cT_n(\R)$.
\begin{cor} \label{betticor}
We have
\[
\dim H^i(\cT_n(\R),\Q)=A_{2i}\binom{n}{2i},
\]
where $A_{2i}$ denotes the Euler secant number, i.e.\ the coefficient of $\frac{x^{2i}}{(2i)!}$ in the power series $\sec(x)$. 
In particular, $H^i(\cT_n(\R),\Q)=0$ if $i>\frac{n}{2}$.
\end{cor}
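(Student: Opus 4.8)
The plan is to derive Corollary \ref{betticor} from Theorem \ref{mainthm} by applying the ring homomorphism that records dimensions. Define $\phi\colon R\to\Q[[x]]$ by $\phi(V)=\frac{\dim V}{n!}\,x^n$ for $V\in R(S_n)$, extended $\Q$-linearly and by continuity; because $\dim\Ind_{S_m\times S_n}^{S_{m+n}}(U\boxtimes V)=\binom{m+n}{m}\dim U\,\dim V$, the induction product is carried to ordinary multiplication of exponential generating functions, so $\phi$ is a ring homomorphism. Extending it to a $\Q[t]$-algebra homomorphism $R[t]\to\Q[[x]][t]$, I would apply $\phi$ to both sides of Theorem \ref{mainthm}.

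On the left side, the identity $1=1_{S_0}$ maps to $1$ and each class $H^i(\cT_n(\R),\Q)$ maps to $\frac{\dim H^i(\cT_n(\R),\Q)}{n!}x^n$, so the left side becomes $1+\sum_{n\geq 1}\sum_i \dim H^i(\cT_n(\R),\Q)\,\frac{x^n}{n!}(-t)^i$. On the right side, $\sum_{n\geq 0}1_{S_n}$ maps to $\sum_{n\geq 0}\frac{x^n}{n!}=e^x$, while the second factor maps to $1+\sum_{k\geq 1}\frac{x^{2k}}{(2k)!}t^k=\cosh(x\sqrt{t})$, since each sign character is one-dimensional. Hence the right side becomes $e^x/\cosh(x\sqrt{t})$.

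It then remains to expand this generating function and compare coefficients. Formally $\cosh(x\sqrt{t})=\cos(x\sqrt{-t})$, so $1/\cosh(x\sqrt{t})=\sec(x\sqrt{-t})=\sum_i A_{2i}\frac{(x\sqrt{-t})^{2i}}{(2i)!}=\sum_i A_{2i}\frac{x^{2i}}{(2i)!}(-t)^i$ by the defining expansion of the secant. Multiplying by $e^x$ and extracting the coefficient of $\frac{x^n}{n!}(-t)^i$ on each side — using that the coefficient of $\frac{x^n}{n!}$ in $\frac{x^{2i}}{(2i)!}e^x$ is $\binom{n}{2i}$ — yields $\dim H^i(\cT_n(\R),\Q)=A_{2i}\binom{n}{2i}$. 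The final assertion is then immediate, since $\binom{n}{2i}=0$ whenever $2i>n$.

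I expect no serious obstacle: the content of the corollary is entirely contained in Theorem \ref{mainthm}, and the work is the careful but routine bookkeeping of exponential generating functions under $\phi$. The one point demanding slight care is the formal identity $1/\cosh(x\sqrt{t})=\sec(x\sqrt{-t})$ together with the matching of powers of $t$ against cohomological degree, which is why I track the variable $(-t)^i$ throughout rather than specializing $t$.
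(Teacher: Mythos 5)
Your proposal is correct and is essentially identical to the paper's own argument: the paper likewise applies the ring homomorphism $R\to\Q[\![x]\!]$ sending $V\in R(S_n)$ to $(\dim V)\frac{x^n}{n!}$ to Theorem \ref{mainthm}, obtaining $\exp(x)\sech(t^{1/2}x)$ on the right-hand side, and then reads off the coefficients exactly as you do. Your extra care with the formal identity $\sech(t^{1/2}x)=\sec((-t)^{1/2}x)$ and the bookkeeping of $(-t)^i$ is the only detail the paper leaves implicit.
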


From the De~Concini--Procesi description one sees that there is a natural morphism $\overline{\cM_{0,n+2}}\to\cT_n$, where $\overline{\cM_{0,n+2}}$ denotes the moduli space of stable genus $0$ curves with $n+2$ marked points. In Section 4, we use this morphism and the results of Etingof et al.\ \cite{etingofetal} on the rational cohomology of $\overline{\cM_{0,n+2}}(\R)$ to show that the cup products of elements of $H^1(\cT_n(\R),\Q)$ do not span $H^2(\cT_n(\R),\Q)$, for $n\geq 4$. Thus the cohomology ring $H^*(\cT_n(\R),\Q)$ is not generated in degree $1$, in contrast to \cite[Theorem 2.9]{etingofetal}. It would be interesting to find a presentation for it.
\section{The De~Concini--Procesi model}
Let $V$ be a finite-dimensional complex vector space, and $\cG$ a collection of nonzero subspaces of the dual space $V^*$, including $V^*$ itself, which satisfies the definition \cite[Section 2.3]{dp} of a building set. For any $A\in\cG$, let $A^\perp$ denote the orthogonal subspace of $V$. Let $\cM_\cG$ be the complement in $\Pp(V)$ of the union of all $\Pp(A^\perp)$ for $A\in\cG$, and let
\[ \rho:\cM_\cG\to\prod_{A\in\cG}\Pp(V/A^\perp) \]
be the morphism whose component in the factor labelled by $A$ is the restriction to $\cM_\cG$ of the obvious map $\Pp(V)\setminus\Pp(A^\perp)\to\Pp(V/A^\perp)$. Since the factor labelled by $A=V^*$ is $\Pp(V)$ itself, $\rho$ is injective. The projective De~Concini--Procesi model $\overline{Y}_\cG$ is defined in \cite[Section 4.1]{dp} to be the closure of $\rho(\cM_\cG)$ in $\prod_{A\in\cG}\Pp(V/A^\perp)$.

In this paper, we take $V=\C^n$ for a positive integer $n$, let $x_1,x_2,\cdots,x_n$ be the coordinate functions in $(\C^n)^*$, and set
\begin{equation} \label{cgeqn}
\cG=\{\C\{x_i\,|\,i\in I\}\,|\,\emptyset\neq I\subseteq [n]\}.
\end{equation}
Here $[n]$ denotes $\{1,2,\cdots,n\}$.
Then
\begin{equation}
\cM_\cG=\{[a_1:a_2:\cdots:a_n]\in\Pp^{n-1}\,|\,a_i\neq 0\text{ for all }i\}=T_n\text{ say,}
\end{equation}
the complement of the coordinate hyperplanes in $\Pp^{n-1}$. (Note that $\cG$ is the maximal building set for this particular complement.) We may obviously identify $T_n$ with the $(n-1)$-dimensional algebraic torus with cocharacter lattice $N=\Z^n/\Z(1,1,\cdots,1)$.

For any nonempty subset $I\subseteq[n]$, we set
\[
\Pp^I=\Pp(\C^n/\{(a_1,\cdots,a_n)\in\C^n\,|\,a_i=0\text{ for all }i\in I\})=\{[a_i^I]_{i\in I}\},
\]
where $[a_i^I]_{i\in I}$ denotes an $I$-tuple of homogeneous coordinates. (We identify $\Pp^{[n]}$ with $\Pp^{n-1}$ and drop the superscript $[n]$ in $a_i^{[n]}$.) Then the morphism $\rho$ becomes
\begin{equation}
\rho:T_n\to\prod_{\emptyset\neq I\subseteq [n]}\Pp^I,
\end{equation}
where the $I$-component of $\rho([a_1:a_2:\cdots:a_n])$ is $[a_i]_{i\in I}$. The De~Concini--Procesi model $Y_n=\overline{Y}_\cG$ is the closure of $\rho(T_n)$ as above. Notice that the factors $\Pp^I$ where $|I|=1$ are points, and make no difference to the definition of $Y_n$, but it is sometimes notationally convenient to keep them.

\begin{exam}
Clearly $Y_1\cong\Pp^0$ (a point), and $Y_2\cong\Pp^1$. 
\end{exam}
\begin{exam}
When $n=3$, we can identify $\rho$ with the map
\[
T_3\to\Pp^2\times\Pp^1\times\Pp^1\times\Pp^1:[a_1:a_2:a_3]\mapsto([a_1:a_2:a_3],[a_1:a_2],[a_1:a_3],[a_2:a_3]),
\]
where the $\Pp^1$ factors are, respectively, $\Pp^{\{1,2\}}$, $\Pp^{\{1,3\}}$, and $\Pp^{\{2,3\}}$. Thus $Y_3$ is the blow-up of $\Pp^2$ at the three points $[0:0:1]$, $[0:1:0]$, and $[1:0:0]$. 
\end{exam}
\begin{rmk}
For $n\geq 4$, $Y_n$ can be obtained by an iterated blow-up procedure: first one blows up $\Pp^{n-1}$ at the $n$ coordinate points, then one blows up again along the proper transforms of the $\binom{n}{2}$ coordinate lines, then one blows up again along the proper transforms of the $\binom{n}{3}$ coordinate planes, and so forth. This is the description given in \cite[Section 3]{procesi}. 
\end{rmk}

Note that the symmetric group $S_n$ acts on $\displaystyle\prod_{\emptyset\neq I\subseteq [n]}\Pp^I$ in a natural way: the action of $w\in S_n$ uses the isomorphisms 
\begin{equation} 
\Pp^I\isomto\Pp^{w(I)}:[a_i^I]_{i\in I}\mapsto[a_{w^{-1}(j)}^I]_{j\in w(I)}. 
\end{equation} 
Clearly $\rho$ is $S_n$-equivariant, where $S_n$ acts on $T_n$ by permuting coordinates. Thus $S_n$ acts on $Y_n$.
There is also a natural action of the torus $T_n$ on each $\Pp^I$ by 
\begin{equation}
[a_1:a_2:\cdots:a_n].[a_i^I]_{i\in I}=[a_ia_i^I]_{i\in I}. 
\end{equation} 
Clearly $\rho$ is $T_n$-equivariant, so $T_n$ acts on $Y_n$.

\begin{prop} \label{equationprop}
$Y_n$ is the closed subvariety of $\displaystyle\prod_{\emptyset\neq I\subseteq [n]}\Pp^I$ defined by the condition that $(a_i^I)_{i\in I}$ and $(a_i^J)_{i\in I}$ are linearly dependent for all $\emptyset\neq I\subseteq J\subseteq[n]$.
\end{prop}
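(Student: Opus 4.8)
The plan is to let $Z$ denote the closed subvariety of $\prod_{\emptyset\neq I\subseteq[n]}\Pp^I$ cut out by the stated conditions, and to prove $Y_n=Z$ by establishing the two inclusions separately. First I would record that the linear dependence of $(a_i^I)_{i\in I}$ and $(a_i^J)_{i\in I}$ is equivalent to the vanishing of all the $2\times 2$ minors $a_i^I a_j^J-a_j^I a_i^J$ for $i,j\in I$, each of which is bihomogeneous of degree $(1,1)$ in the coordinates of $\Pp^I$ and $\Pp^J$; hence $Z$ is genuinely a closed subvariety. The inclusion $Y_n\subseteq Z$ is then immediate: on $\rho(T_n)$ the $I$- and $J$-components are $[a_i]_{i\in I}$ and $[a_i]_{i\in J}$, so both $(a_i^I)_{i\in I}$ and $(a_i^J)_{i\in I}$ are proportional to $(a_i)_{i\in I}$ and the minors vanish. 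Thus $\rho(T_n)\subseteq Z$, and since $Z$ is closed and $Y_n=\overline{\rho(T_n)}$, we conclude $Y_n\subseteq Z$.

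The substance is the reverse inclusion $Z\subseteq Y_n$, which I would prove by exhibiting every point of $Z$ as a limit of points of $\rho(T_n)$ under a one-parameter subgroup of $T_n$. Given $p=([a^I]_I)\in Z$, I first extract combinatorial and numerical data from the minor equations. Let $S_1\subseteq[n]$ be the support of the top component $a^{[n]}$. Taking $J=[n]$ in the defining relations forces, for every $I$ with $I\cap S_1\neq\emptyset$, that $a^I$ is proportional to the nonzero vector $(a_i)_{i\in I}$ and hence is completely determined and supported on $I\cap S_1$. On the other hand the components indexed by $I\subseteq S_1^c$ are unconstrained by $a^{[n]}$; one checks that the remaining relations among them are exactly those defining the analogous variety for the index set $S_1^c$, while the relations linking an $I\subseteq S_1^c$ to a $J$ meeting $S_1$ are vacuous because $a^J|_I=0$. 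By induction on $n$ (the base case $S_1=[n]$ giving $p\in\rho(T_n)$ directly) this produces an ordered partition $[n]=S_1\sqcup\cdots\sqcup S_m$ and nonzero scalars $c_i$ ($i\in[n]$), well defined up to scaling within each block, such that for every $I$ the component $a^I$ equals $[c_i]_{i\in I\cap S_{j_0}}$, where $S_{j_0}$ is the earliest block meeting $I$, i.e.\ $j_0=\min\{j\,|\,S_j\cap I\neq\emptyset\}$.

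I would then construct the degeneration explicitly. Take $q=[c_1:\cdots:c_n]\in T_n$ for any choice of representatives, and let $\lambda\colon\C^\times\to T_n$ be the one-parameter subgroup with weight $w_i=j$ on each coordinate $i\in S_j$. The $I$-component of $\rho(\lambda(t)\cdot q)$ is $[t^{w_i}c_i]_{i\in I}$; normalizing by the least exponent $\min_{i\in I}w_i=j_0$ shows that the coordinates with $w_i>j_0$ tend to $0$ while those on $I\cap S_{j_0}$ survive, so the limit as $t\to 0$ is $[c_i]_{i\in I\cap S_{j_0}}$. This is precisely the $I$-component of $p$ for every $I$, whence $p=\lim_{t\to 0}\lambda(t)\cdot\rho(q)\in\overline{\rho(T_n)}=Y_n$, completing the inclusion.

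The main obstacle is the structural step of the previous paragraph, namely verifying that the minor equations are genuinely \emph{exhaustive}: that they determine all components indexed by sets meeting a given block, that they impose no relations beyond the recursive ones across blocks, and hence that the inductive description $(S_1,\dots,S_m;c_i)$ accounts for \emph{every} point of $Z$. As a consistency check, this description exhibits $Z$ as a disjoint union of strata indexed by ordered partitions, the stratum of an $m$-block partition having dimension $\sum_j(|S_j|-1)=n-m$, so that the top stratum recovers $T_n$ and all others have strictly smaller dimension; once the structural analysis is secured, the limit computation in each projective factor is a routine normalization.
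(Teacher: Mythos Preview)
Your proof is correct and follows essentially the same strategy as the paper's: extract from a point of $Z$ a decreasing chain $[n]=K_1\supset K_2\supset\cdots\supset K_{m+1}=\emptyset$ (your blocks are $S_j=K_j\setminus K_{j+1}$) by repeatedly taking vanishing loci, observe that the minor equations force every $I$-component to agree with the $K_s$-component for the least $s$ with $I\subseteq K_s$, and then degenerate via the one-parameter family $a_i(t)=t^j c_i$ for $i\in S_j$. The only differences are presentational---the paper works directly with the chain $(K_s)$ and a recursive definition rather than your inductive framing via ordered partitions and one-parameter subgroups---but the content and the limit computation are identical.
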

\noindent
Note that $[a_j^J]_{j\in J}\in\Pp^J$ is a $J$-tuple of homogeneous coordinates, so by definition $a_j^J\neq 0$ for some $j\in J$. However, it is possible that $a_i^J=0$ for all $i$ in the smaller subset $I$, in which case the condition in Proposition \ref{equationprop} is automatically satisfied; otherwise, the condition is equivalent to saying that $[a_i^I]_{i\in I}=[a_i^J]_{i\in I}$.
\begin{exam} \label{y3exam}
When $n=3$, Proposition \ref{equationprop} asserts that
\[
([a_1:a_2:a_3],[b_1:b_2],[c_1:c_3],[d_2:d_3])\in\Pp^2\times\Pp^1\times\Pp^1\times\Pp^1
\] 
lies in $Y_3$ exactly when $a_1b_2=a_2b_1$, $a_1c_3=a_3c_1$, and $a_2d_3=a_3d_2$.
\end{exam}
\begin{proof}
It is clear that the condition in Proposition \ref{equationprop} can be rephrased in terms of equations in the coordinates $a_i^I$ as in Example \ref{y3exam}, so it does define a closed subvariety $Z$ of $\displaystyle\prod_{\emptyset\neq I\subseteq [n]}\Pp^I$. It is also clear that $\rho(T_n)\subseteq Z$. Hence $Y_n\subseteq Z$. To prove the reverse inclusion, suppose that $p\in Z$ has $I$-component $[a_i^I]_{i\in I}$. Let $K_1=[n]$, and define a chain of subsets $K_1\supset K_2 \supset K_3\supset\cdots\supset K_m\supset K_{m+1}=\emptyset$ by the recursive rule
\begin{equation} \label{vanishingeqn}
K_{\ell+1}=\{k\in K_\ell\,|\,a_k^{K_\ell}=0\}\text{ for }1\leq\ell\leq m,
\end{equation}
where $m$ is minimal such that $a_k^{K_m}\neq 0$ for all $k\in K_m$. For any nonempty $I\subseteq[n]$, there is some $s\leq m$ such that $I\subseteq K_s$, $I\not\subseteq K_{s+1}$. By definition of $Z$, we must have $[a_i^I]_{i\in I}=[a_i^{K_s}]_{i\in I}$. So $p$ is determined by its $K_s$-components for $1\leq s\leq m$, which are constrained only by the vanishing conditions \eqref{vanishingeqn}. For any $t\in\C^\times$, define $q(t)=[a_1(t):a_2(t):\cdots:a_n(t)]\in T_n$ by the rule that $a_i(t)=t^s a_i^{K_s}$, if $i\in K_s$ and $i\notin K_{s+1}$. Then it is easy to see that $\lim_{t\to 0}\rho(q(t))=p$, in the sense that we have a morphism 
\[ \sigma:\C\to Z:t\mapsto\begin{cases}
\rho(q(t)),&\text{ if }t\neq 0,\\
p,&\text{ if }t=0.
\end{cases} \]
This proves that $p\in\overline{\rho(T_n)}=Y_n$.
\end{proof}

We can now decompose $Y_n$ as the disjoint union of locally closed subvarieties $\cO_{K_1,\cdots,K_{m+1}}$, one for each chain of subsets $[n]=K_1\supset K_2\supset\cdots\supset K_{m+1}=\emptyset$. Namely, a point of $Y_n$ with $I$-component $[a_i^I]_{i\in I}$ lies in $\cO_{K_1,\cdots,K_{m+1}}$ if and only if $K_1,K_2,\cdots,K_{m+1}$ are the subsets defined recursively by \eqref{vanishingeqn}. 
\begin{exam}
For a point of $Y_3$ with notation as in Example \ref{y3exam}, the conditions for belonging to certain pieces of the disjoint union are as follows:
\[
\begin{split}
\cO_{[3],\emptyset}&:\ a_1,a_2,a_3\neq 0,\\
\cO_{[3],\{1\},\emptyset}&:\ a_1=0,\,a_2,a_3\neq 0,\\
\cO_{[3],\{1,2\},\emptyset}&:\ a_1=a_2=0,\ b_1,b_2\neq 0,\\
\cO_{[3],\{1,2\},\{1\},\emptyset}&:\ a_1=a_2=0,\ b_1=0.\\
\end{split}
\]
\end{exam}
\begin{prop} \label{orbitprop}
The subvarieties $\cO_{K_1,\cdots,K_{m+1}}$ are exactly the $T_n$-orbits in $Y_n$.
\end{prop}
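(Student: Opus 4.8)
The plan is to prove two complementary statements: that each piece $\cO_{K_1,\cdots,K_{m+1}}$ is stable under $T_n$, and that $T_n$ acts transitively on it. Since we have already exhibited $Y_n$ as the disjoint union of these pieces, and since a $T_n$-stable piece on which $T_n$ acts transitively is a single orbit, this will identify the pieces with the orbits exactly. Stability is the easy half: the action of $t=[t_1:\cdots:t_n]\in T_n$ sends the $I$-component $[a_i^I]_{i\in I}$ of a point to $[t_ia_i^I]_{i\in I}$, and because every $t_i$ is nonzero we have $t_ia_i^I=0$ if and only if $a_i^I=0$. Thus the torus action does not change the vanishing pattern of any of the homogeneous coordinates, so the recursion \eqref{vanishingeqn} produces the same chain $K_1\supset\cdots\supset K_{m+1}$ for $t\cdot p$ as for $p$; hence each $\cO_{K_1,\cdots,K_{m+1}}$ is a union of $T_n$-orbits.

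The substance is transitivity. Given $p,p'\in\cO_{K_1,\cdots,K_{m+1}}$ with $I$-components $[a_i^I]_{i\in I}$ and $[(a')^I_i]_{i\in I}$ respectively, I would construct an explicit $t\in T_n$ with $t\cdot p=p'$. The structural input is drawn from the proof of Proposition \ref{equationprop}: such a point is determined by its $K_s$-components for $1\leq s\leq m$, and by the defining recursion the coordinate $a_i^{K_s}$ vanishes for $i\in K_{s+1}$ and is nonzero for $i\in K_s\setminus K_{s+1}$, the same holding for $p'$ since it lies in the same piece. Now $[n]$ is the disjoint union of the blocks $K_s\setminus K_{s+1}$, so each index $i$ lies in exactly one block, say $i\in K_{s(i)}\setminus K_{s(i)+1}$, and I may set $t_i=(a')^{K_{s(i)}}_i/a_i^{K_{s(i)}}$. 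This is a well-defined nonzero complex number, so $t=[t_1:\cdots:t_n]$ is a genuine element of $T_n$. For each fixed $s$ the $K_s$-components of $t\cdot p$ and of $p'$ then coincide: on $K_{s+1}$ both tuples are identically zero, while on $K_s\setminus K_{s+1}$ we have $t_ia_i^{K_s}=(a')^{K_s}_i$ by construction, so the two $K_s$-tuples are literally equal and in particular define the same point of $\Pp^{K_s}$.

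The one point demanding care is the passage from agreement on the $K_s$-components to agreement of the full points, which is exactly where Proposition \ref{equationprop} is used: two points of $Y_n$ with the same $K_s$-components for all $s$ are equal. I expect this to be the crux of the argument, since everything else reduces to a direct manipulation of the torus action on homogeneous coordinates; one must also keep in mind that $t\cdot p$ does lie in $Y_n$, which follows from the $T_n$-invariance of $Y_n=\overline{\rho(T_n)}$. Granting this, $t\cdot p=p'$, so $T_n$ acts transitively on $\cO_{K_1,\cdots,K_{m+1}}$. Combined with stability, each piece is a single $T_n$-orbit, and as the pieces partition $Y_n$ they are precisely the $T_n$-orbits.
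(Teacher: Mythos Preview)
Your proof is correct and follows the same approach as the paper's own proof, which is quite terse: the paper simply asserts that $T_n$-stability is clear and that transitivity is immediate from the observation (in the proof of Proposition~\ref{equationprop}) that a point of $\cO_{K_1,\cdots,K_{m+1}}$ is determined by its $K_s$-components with the coordinates indexed by $K_{s+1}$ vanishing. Your proposal supplies exactly the details the paper elides --- the explicit construction of $t\in T_n$ block by block and the verification that it carries the $K_s$-components of $p$ to those of $p'$ --- so there is no substantive difference in strategy.
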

\begin{proof}
It is clear that each $\cO_{K_1,\cdots,K_{m+1}}$ is $T_n$-stable.
As observed in the proof of Proposition \ref{equationprop}, a point of $\cO_{K_1,\cdots,K_{m+1}}$ is determined by its $K_s$-components, and the coordinates of the $K_s$-component indexed by $K_{s+1}$ must vanish by definition. From this it is immediate that $T_n$ acts transitively on $\cO_{K_1,\cdots,K_{m+1}}$.
\end{proof} 
For example, $\cO_{[n],\emptyset}=\rho(T_n)$. In general, 
\begin{equation}
\cO_{K_1,\cdots,K_{m+1}}\cong T_{|K_1|-|K_2|}\times T_{|K_2|-|K_3|}\times\cdots\times T_{|K_m|-|K_{m+1}|} 
\end{equation}
is a torus of dimension $n-m$. 

\begin{rmk}
The chains $[n]=K_1\supset K_2\supset\cdots\supset K_{m+1}=\emptyset$ are in obvious bijection with the nested subsets of $\cG$, in the terminology of \cite{dp}.
By an argument similar to the proof of Proposition \ref{equationprop}, one can show that the closure $\overline{\cO_{K_1,\cdots,K_{m+1}}}$ is the subvariety of $Y_n$ defined by the condition that $a_k^{K_\ell}=0$ for $k\in K_{\ell+1}$, for $1\leq\ell\leq m$. This is the closed subvariety $D_{K_1,\cdots,K_{m+1}}$ of \cite[Section 4.3]{dp}. It is clear that $D_{K_1,\cdots,K_{m+1}}$ is the union of all orbits $\cO_{L_1,\cdots,L_{m'+1}}$ such that the chain $(L_s)$ refines the chain $(K_s)$. We have an obvious isomorphism
\begin{equation}
D_{K_1,\cdots,K_{m+1}}\cong Y_{|K_1|-|K_2|}\times Y_{|K_2|-|K_3|}\times\cdots\times Y_{|K_m|-|K_{m+1}|}, 
\end{equation}
which is merely what \cite[Theorem 4.3]{dp} says for our special $\cG$.
\end{rmk}

Now let $\cT_n$ be the toric variety associated to the lattice $N=\Z^n/\Z(1,1,\cdots,1)$ and the Coxeter fan $\Delta$, as in the introduction. The torus which naturally acts on $\cT_n$ is the one with cocharacter lattice $N$, in other words the torus $T_n$. By \cite[Section 3.1]{fulton}, the $T_n$-orbits in $\cT_n$ are naturally in bijection with the cones of $\Delta$, and these in turn are in bijection with the chains $[n]=K_1\supset K_2\supset\cdots\supset K_{m+1}=\emptyset$. So the above description of $T_n$-orbits in $Y_n$ immediately suggests the following result.

\begin{prop}[De~Concini--Procesi] \label{isomprop}
There is an isomorphism $\cT_n\isomto Y_n$ which respects the actions of $S_n$ and $T_n$ on both varieties.
\end{prop}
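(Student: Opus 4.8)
The plan is to identify $Y_n$ directly with the toric variety $X_\Delta=\cT_n$ by proving that, as a toric variety for $T_n$, its fan is exactly $\Delta$. First I would record that $Y_n$ is a smooth projective variety: it is closed in a product of projective spaces (hence projective and complete), and the iterated blow-up description in the Remark above, each stage blowing up a smooth variety along a smooth centre, exhibits it as smooth. Since $Y_n$ contains the dense orbit $\cO_{[n],\emptyset}=\rho(T_n)\cong T_n$, on which $T_n$ acts as it does on itself, $Y_n$ is a smooth complete toric variety, hence of the form $X_\Sigma$ for a unique complete fan $\Sigma$ in $N\otimes\R$ \cite[Section 1.4]{fulton}. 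It then suffices to prove $\Sigma=\Delta$: the classification supplies a canonical $T_n$-equivariant isomorphism $\cT_n=X_\Delta\isomto X_\Sigma=Y_n$ restricting to the identity on $T_n$, and this is the desired map, with $S_n$-equivariance handled at the end.

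The key step is to compute the cone of $\Sigma$ attached to each orbit. By Proposition \ref{orbitprop} the orbits are the $\cO_{K_1,\dots,K_{m+1}}$, indexed by chains $[n]=K_1\supset\cdots\supset K_{m+1}=\emptyset$, and by the orbit--cone correspondence \cite[Section 3.1]{fulton} the cone $\sigma$ attached to an orbit $\cO$ is characterised by the property that a cocharacter $\lambda=(\lambda_1,\dots,\lambda_n)\in N$ lies in the relative interior of $\sigma$ exactly when $\lim_{t\to0}\lambda(t)$ lies in $\cO$. So I would act with such a $\lambda$ on the base point $x_0=\rho([1:\cdots:1])$ and read off the limit: the $I$-component of $\lambda(t)\cdot x_0$ is $[t^{\lambda_i}]_{i\in I}$, whose limit as $t\to0$ is the indicator tuple of $\{i\in I:\lambda_i=\min_{j\in I}\lambda_j\}$. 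Substituting these limiting coordinates into the recursion \eqref{vanishingeqn} strips off the minimal level set at each stage, so the resulting chain is the level-set filtration of $\lambda$: if $v_1<\cdots<v_m$ are the distinct values of $\lambda$, then $K_\ell=\{i:\lambda_i\geq v_\ell\}$ and $K_\ell\setminus K_{\ell+1}=\{i:\lambda_i=v_\ell\}$. Thus $\lim_{t\to0}\lambda(t)\cdot x_0$ lies in $\cO_{K_1,\dots,K_{m+1}}$ if and only if $\lambda$ is constant on each block $K_\ell\setminus K_{\ell+1}$ with strictly increasing values across the blocks.

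This condition describes precisely the lattice points in the relative interior of the Coxeter-fan cone attached to the ordered set partition $(K_1\setminus K_2,\dots,K_m\setminus K_{m+1})$, namely the cone of all $a\in N\otimes\R$ that are constant on each block with values non-decreasing in $\ell$. Hence the cone of $\Sigma$ attached to $\cO_{K_1,\dots,K_{m+1}}$ and this cone of $\Delta$ have the same relative-interior lattice points; being rational polyhedral cones they therefore coincide. As the chains range over their whole index set these cones exhaust both $\Sigma$ and $\Delta$, whence $\Sigma=\Delta$. I expect the main obstacle to be exactly this identification — carefully matching the recursive orbit description \eqref{vanishingeqn} with the level sets of $\lambda$ and with the faces of the Coxeter chambers — since everything else is formal once the two fans are shown to agree.

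For the $S_n$-equivariance, let $\phi:\cT_n\isomto Y_n$ denote the isomorphism just produced, which restricts to the identity on $T_n$. The group $S_n$ acts on $T_n$ by the same permutation automorphism in both varieties, and $\Delta$ is $S_n$-stable. For $w\in S_n$ a direct check shows that $w_Y\circ\phi\circ w_{\cT}^{-1}$ is again $T_n$-equivariant and restricts to the identity on $T_n$, so by separatedness it agrees with $\phi$ on the dense torus and hence everywhere. Therefore $\phi\circ w_{\cT}=w_Y\circ\phi$ for all $w$, and $\phi$ is $S_n$-equivariant as required.
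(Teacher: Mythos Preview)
Your argument is correct, but it takes a different route from the paper's. The paper's proof also begins by observing that $Y_n$ is a smooth complete toric variety and hence equals $X_{\Delta'}$ for some complete fan $\Delta'$; however, rather than computing the cones of $\Delta'$ directly, it uses symmetry. Since the $S_n$-action on $Y_n$ is compatible with its action on $T_n$, the fan $\Delta'$ is $S_n$-stable; the $T_n$-fixed points are precisely the $n!$ zero-dimensional orbits $\cO_{K_1,\dots,K_{n+1}}$, so $S_n$ permutes the maximal cones of $\Delta'$ simply transitively; hence no reflecting hyperplane can meet the interior of a maximal cone, forcing each maximal cone inside a single Weyl chamber and therefore $\Delta'=\Delta$. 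Your approach instead computes, for each cocharacter $\lambda$, the limit $\lim_{t\to 0}\lambda(t)\cdot x_0$ and matches the resulting chain from \eqref{vanishingeqn} with the level-set filtration of $\lambda$, thereby identifying each cone of $\Sigma$ explicitly with the corresponding face of the Coxeter fan. The paper's argument is shorter and, following \cite{dps}, transplants immediately to the Coxeter toric variety of an arbitrary Weyl group; your argument is more hands-on and yields the explicit orbit--cone dictionary (which the paper only states), at the cost of a direct limit computation. Both handle $S_n$-equivariance the same way in spirit, via agreement on the dense torus.
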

\begin{proof}
The proof is omitted from \cite[Section 3]{procesi}, but an argument similar to that in \cite[Section IV]{dps} works. Namely, $Y_n$ is nonsingular by \cite[Theorem 4.2]{dp} and contains an open subvariety $\rho(T_n)$ on which $T_n$ acts simply transitively, so $Y_n$ is isomorphic to the toric variety associated to $N$ and some complete fan $\Delta'$ in $N\otimes\R$. Since $S_n$ acts on $Y_n$ in a way compatible with its action on $T_n$, the fan $\Delta'$ must be $S_n$-stable. The maximal cones in $\Delta'$ correspond to the $T_n$-fixed points in $Y_n$, which are exactly the $0$-dimensional orbits $\cO_{K_1,\cdots,K_{n+1}}$, where $|K_i|=n+1-i$ for all $i$. Hence $S_n$ permutes the maximal cones of $\Delta'$ simply transitively, which means that no reflecting hyperplane for the action of $S_n$ can intersect the interior of a maximal cone. Hence the interior of each maximal cone of $\Delta'$ is contained in a single chamber of the hyperplane complement, forcing $\Delta'=\Delta$.
\end{proof}
\section{Poset homology and the proof of Theorem \ref{mainthm}}
It is clear that the isomorphism of Proposition \ref{isomprop} respects the obvious real structures of $\cT_n$ and $Y_n$, so the real variety $\cT_n(\R)$ is isomorphic to $Y_n(\R)$. This allows us to use Rains' theorem \cite[Theorem 3.7]{rains}, or rather its dual form \cite[Theorem 2.1]{hendersonrains}, which expresses the rational cohomology of $\overline{Y}_\cG(\R)$ in terms of poset homology. By \cite[Theorem 4.1]{gaiffi}, when $\cG$ is the complexification of a real building set as in the present case, there is no difference between the real locus of the complex variety $\overline{Y}_\cG$ and the real variety as defined in \cite{rains}. 

The general statement of Rains' theorem involves the poset $\Pi_\cG^{(2)}$ of direct sums of even-dimensional elements of $\cG$. In our case, this is clearly the same as the poset of even-dimensional elements of $\cG$ with the minimal element $\{0\}$ added, which in turn is isomorphic to the poset $B_n^\ev$ of even-size subsets of $[n]$. Hence \cite[Theorem 2.1]{hendersonrains} gives us an isomorphism of $\Q S_n$-modules
\begin{equation} \label{firstrainseqn}
H^i(Y_n(\R),\Q)\cong\bigoplus_{I\in B_n^\ev} H_{|I|-i}(\emptyset,I)\otimes\varepsilon_I.
\end{equation}
Here $H_{m}(\emptyset,I)$ denotes a poset homology group with $\Q$-coefficients of the open interval $(\emptyset,I)$ in the poset $B_n^\ev$. Our degree convention follows \cite{henderson2, hendersonrains}: thus $H_m(\emptyset,I)$ is what would be called $\widetilde{H}_{m-2}((\emptyset,I),\Q)$ in texts such as \cite{wachs}. By definition, $H_m(\emptyset,\emptyset)$ is $\Q$ if $m=0$ and $0$ otherwise, and if $I$ is an atom of $B_n^\ev$ (that is, $|I|=2$), then $H_m(\emptyset,I)$ is $\Q$ if $m=1$ and $0$ otherwise. The tensoring with $\varepsilon_I$ also requires explanation: this is to indicate that if $w\in S_n$ preserves $I$, we want not its usual action on the homology group $H_{|I|-i}(\emptyset,I)$, but that action multiplied by $\varepsilon(w)$.

Now $B_n^\ev$ is a rank-selected subposet of the Boolean lattice $B_n$, and is itself a ranked poset with $\rk(I)=|I|/2$. By \cite[Theorem 3.4.1]{wachs}, $B_n^\ev$ is Cohen--Macaulay. In particular, $H_m(\emptyset,I)$ is nonzero if and only if $m=\rk(I)$, or in other words $|I|=2m$. So \eqref{firstrainseqn} can be simplified to
\begin{equation} \label{secondrainseqn}
H^i(Y_n(\R),\Q)\cong\bigoplus_{\substack{I\in B_n^\ev\\|I|=2i}} H_{i}(\emptyset,I)\otimes\varepsilon_I.
\end{equation}  
Here the right-hand side is just a sign-twisted version of the Whitney homology
\[
WH_i(B_n^\ev)=\bigoplus_{\substack{I\in B_n^\ev\\|I|=2i}} H_{i}(\emptyset,I).
\]
Note that if $|I|=2i$, then the closed interval $[\emptyset,I]$ is isomorphic to $B_{2i}^\ev$.

The homology of $B_n^\ev$ is expressed as a representation of $S_n$ by \cite[Theorem 3.4.4]{wachs}. Here we derive a different expression via the method of \cite[Section 4.4]{wachs} and \cite{henderson2}, using the ring $R=\widehat{\bigoplus}_{n\geq 0}R(S_n)$ from the introduction.
\begin{prop} \label{schprop}
We have the following equality in $R$:
\[
1+\sum_{\substack{n\geq 2\\n\textup{ even}}}(-1)^{n/2}H_{n/2}(\emptyset,[n])=(1+\sum_{\substack{n\geq 2\\n\textup{ even}}}1_{S_n})^{-1}.
\]
\end{prop}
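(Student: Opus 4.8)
The plan is to establish the identity by recognizing both sides as reflecting a standard relationship between the homology of an interval and the Möbius/characteristic function of the poset, packaged in the ring $R$. The key observation is that the right-hand side is the inverse (in the power series ring $R$) of the series $1+\sum_{n\geq 2,\,n\text{ even}}1_{S_n}$, whose coefficients are the trivial representations $1_{S_n}$ over even $n$. These trivial representations are precisely the $S_n$-representations on $H_0$ of the closed interval $[\emptyset,[n]]\cong B_n^{\mathrm{ev}}$, or equivalently they record the "all of $B_n^{\mathrm{ev}}$" data. So the statement is really the assertion that the generating function of the top homology $H_{n/2}(\emptyset,[n])$, with signs $(-1)^{n/2}$, is multiplicatively inverse to the generating function that enumerates the poset itself.

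First I would recall the machinery of the induction product and the interpretation of multiplication in $R$: the product of two generating functions corresponds to summing over ordered ways of splitting $[n]$ into two blocks and inducing the external tensor product of the corresponding representations up to $S_n$. With this dictionary, inverting $1+\sum 1_{S_n}$ amounts to a standard inclusion–exclusion / compositional-inverse computation over set compositions into even-sized blocks. Concretely, the coefficient of degree $n$ in the inverse is an alternating sum over ordered set partitions (compositions) of $[n]$ into even blocks, with sign equal to $(-1)^{\text{number of blocks}}$, of the induced trivial representation. This is exactly the form of an alternating sum computing the reduced homology of a poset via its order complex, i.e.\ the Philip Hall / Möbius-function formula realized in the representation ring.

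The substantive step is therefore to identify that alternating sum over even-block compositions with $(-1)^{n/2}H_{n/2}(\emptyset,[n])$. The plan is to invoke the Whitney/EL-shelling structure: since $B_n^{\mathrm{ev}}$ is Cohen--Macaulay (already cited via \cite[Theorem 3.4.1]{wachs}), its only nonvanishing reduced homology sits in top degree $n/2$, and the chains in the open interval $(\emptyset,[n])$ are counted by flags of even subsets, i.e.\ by compositions of $[n]$ into even blocks of length at least $2$. One then writes the $S_n$-equivariant Euler characteristic of the order complex of $(\emptyset,[n])$ as an alternating sum over such chains; because homology is concentrated in one degree, the Euler characteristic equals $(-1)^{n/2}$ times the top homology as a virtual, hence actual, representation. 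Matching this Euler characteristic to the coefficient extracted from the inverse series completes the proof.

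The hard part will be keeping the sign and grading conventions consistent between the two sides, particularly reconciling the degree convention declared in the excerpt (where $H_m(\emptyset,I)$ is the shifted $\widetilde{H}_{m-2}$) with the standard order-complex Euler characteristic, and ensuring the $S_n$-equivariance of the chain-level identification is genuine and not merely numerical. A clean way to organize this is to set up the equality one degree at a time by comparing coefficients of each $1_{S_{n}}$-type term, using the recursion that the inverse series $F^{-1}$ satisfies $F\cdot F^{-1}=1$; this turns the identity into a recursive relation among the $H_{n/2}(\emptyset,[n])$ that matches the recursive structure of the poset homology under the interval decomposition $[\emptyset,I]\cong B_{|I|}^{\mathrm{ev}}$, allowing an induction on $n$.
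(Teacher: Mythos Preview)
Your proposal is correct, and in fact your closing paragraph---comparing coefficients via $F\cdot F^{-1}=1$ and reading this as a recursion on the poset intervals---is essentially the paper's proof. The paper multiplies the left-hand side by $1+\sum_{n\geq 2,\,n\text{ even}}1_{S_n}$ and observes that the degree-$n$ term of the product is
\[
\sum_{\substack{0\leq m\leq n\\ m\text{ even}}}(-1)^{m/2}\Ind_{S_m\times S_{n-m}}^{S_n}\bigl(H_{m/2}(\emptyset,[m])\boxtimes 1_{S_{n-m}}\bigr)
=\sum_{\substack{0\leq m\leq n\\ m\text{ even}}}(-1)^{m/2}\,WH_{m/2}(B_n^{\ev}),
\]
the alternating sum of the Whitney homology groups of $B_n^{\ev}$, which vanishes by a standard result (cited as \cite[Theorem 4.4.1]{wachs}). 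This bypasses the explicit expansion of the inverse.

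Your main route---expanding $(1+\sum 1_{S_n})^{-1}$ as an alternating sum over ordered even-block set compositions and identifying that sum with the $S_n$-equivariant Euler characteristic of the order complex of $(\emptyset,[n])$---is a genuine alternative and is also correct. It is more hands-on: you are effectively reproving the Whitney-homology vanishing in this specific case rather than invoking it. The trade-off is exactly the one you flag: you must track the sign/degree conventions (the shift $H_m\leftrightarrow\widetilde H_{m-2}$, and chains of length $k$ in $[\emptyset,[n]]$ versus $(k{-}1)$-chains in the open interval) carefully, whereas the paper's packaging via Whitney homology absorbs all of that bookkeeping into the cited theorem. Either argument is fine; the paper's is shorter.
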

\begin{proof}
If we multiply $1+\displaystyle\sum_{\substack{n\geq 2\\n\textup{ even}}}(-1)^{n/2}H_{n/2}(\emptyset,[n])$ and $1+\displaystyle\sum_{\substack{n\geq 2\\n\textup{ even}}}1_{S_n}$ in $R$, the result clearly has terms only in even degrees. We must show that if $n\geq 2$ is even, then the degree-$n$ term of this product vanishes. By definition, this term is
\[
\sum_{\substack{0\leq m\leq n\\m\textup{ even}}}(-1)^{m/2}\Ind_{S_m\times S_{n-m}}^{S_n}(H_{m/2}(\emptyset,[m])\boxtimes 1_{S_{n-m}}),
\]
which is clearly equal to
\[
\sum_{\substack{0\leq m\leq n\\m\textup{ even}}}(-1)^{m/2}\bigoplus_{\substack{I\in B_n^\ev\\|I|=m}} H_{m/2}(\emptyset,I)=\sum_{\substack{0\leq m\leq n\\m\textup{ even}}}(-1)^{m/2}\, WH_{m/2}(B_n^\ev).
\]
This vanishes by a well-known general principle, for which one reference is \cite[Theorem 4.4.1]{wachs} (see also \cite[Theorem 4.1]{henderson2}). 
\end{proof}  

We can now give the proof of Theorem \ref{mainthm}.
\begin{proof}
Proposition \ref{schprop} implies the following equality in $R[t]$:
\begin{equation}
(1+\sum_{\substack{n\geq 2\\n\textup{ even}}}\varepsilon_{S_n}t^{n/2})^{-1}=
\sum_{\substack{n\geq 0\\n\textup{ even}}}(H_{n/2}(\emptyset,[n])\otimes\varepsilon_{S_n})\,(-t)^{n/2}.
\end{equation}
Hence for $n\geq 1$, the degree-$n$ term of the right-hand side of Theorem \ref{mainthm} equals
\[
\sum_{\substack{0\leq m\leq n\\m\textup{ even}}}\Ind_{S_m\times S_{n-m}}^{S_n}((H_{m/2}(\emptyset,[m])\otimes\varepsilon_{S_m})\boxtimes 1_{S_{n-m}})\,(-t)^{m/2},
\]
which in turn equals
\[
\sum_{\substack{0\leq m\leq n\\m\textup{ even}}}(\bigoplus_{\substack{I\in B_n^\ev\\|I|=m}}H_{m/2}(\emptyset,I)\otimes\varepsilon_{I})\,(-t)^{m/2}.
\]
By \eqref{secondrainseqn} and Proposition \ref{isomprop}, this equals the degree-$n$ term of the left-hand side of Theorem \ref{mainthm}.
\end{proof}

To deduce Corollary \ref{indcor} from Theorem \ref{mainthm}, we simply observe that
\begin{equation*}
\begin{split}
&\qquad(1+\sum_{\substack{n\geq 2\\n\textup{ even}}}\varepsilon_{S_n}t^{n/2})^{-1}\\
&=1+\sum_{m\geq 1}(-1)^m(\sum_{\substack{n\geq 2\\n\textup{ even}}}\varepsilon_{S_n}t^{n/2})^m\\
&=1+\sum_{m\geq 1}(-1)^m\sum_{\substack{n_1,n_2,\cdots,n_m\geq 2\\n_1,n_2,\cdots,n_m\textup{ even}}}\Ind_{S_{n_1}\times\cdots\times S_{n_m}}^{S_{n_1+\cdots+n_m}}(\varepsilon_{S_{n_1}}\boxtimes\cdots\boxtimes\varepsilon_{S_{n_m}})\,t^{(n_1+\cdots+n_m)/2}.
\end{split}
\end{equation*}

Finally, if we apply to Theorem \ref{mainthm} the ring homomorphism $R\to\Q[\![x]\!]$ which sends $V\in R(S_n)$ to $(\dim V)\frac{x^n}{n!}$, we obtain
\begin{equation}
1+\sum_{n\geq 1}\sum_i \dim H^i(\cT_n(\R),\Q)\,(-t)^i\frac{x^n}{n!}
=\exp(x)\sech(t^{1/2}x),
\end{equation}
from which Corollary \ref{betticor} follows.
\section{Comparison with the moduli space of stable genus-zero curves}
In Section 2, we showed that $\cT_n$ is isomorphic to the De~Concini--Procesi model $\overline{Y}_\cG$ associated to the building set $\cG=\{\C\{x_i\,|\,i\in I\}\,|\,\emptyset\neq I\subseteq [n]\}$ in the dual of the vector space $V=\C^n$. A better-known example of the De~Concini--Procesi construction is the moduli space of stable genus-$0$ curves with marked points. 

Specifically, let $W=\C^{n+1}/\C(1,1,\cdots,1)$, and consider the building set 
\begin{equation}
\cH=\{\C\{y_j-y_{j'}\,|\,j,j'\in J\}\,|\,J\subseteq [n+1], |J|\geq 2\}\text{ in }W^*,
\end{equation}
where $y_1,\cdots,y_{n+1}$ are the coordinate functions on $\C^{n+1}$, so that $y_j-y_{j'}$ is a well-defined element of $W^*$. Then $\cM_\cH$ is the complement in $\Pp(W)$ of the hyperplanes $y_i=y_j$ generating the reflection representation of $S_{n+1}$. (Note that $\cH$ is the minimal building set for this particular complement.) One can identify $\cM_\cH$ with the moduli space $\cM_{0,n+2}$ of ordered configurations of $n+2$ points in $\Pp^1$, since the $(n+2)$th point can be assumed to be the point at infinity, leaving an $(n+1)$-tuple of distinct numbers modulo translation and scaling. 

As observed in \cite[Section 4.3, Remark (3)]{dp}, the De~Concini--Procesi model $\overline{Y}_\cH$ is isomorphic to the moduli space $\overline{\cM_{0,n+2}}$ of stable genus-$0$ curves with $n+2$ marked points, in such a way that the $S_{n+1}$-action on $\overline{Y}_\cH$ is identified with the restriction to $S_{n+1}$ of the $S_{n+2}$-action on $\overline{\cM_{0,n+2}}$. (Some of the details of this identification are in \cite{henderson1} and \cite{rains}.) By definition, we have a projection morphism from $\overline{Y}_\cH$ to
\[
\begin{split}
&\Pp(W/\C\{y_j-y_{j'}\,|\,j,j'\in J\}^\perp)\\
&=\Pp(\C^{n+1}/\{(a_1,\cdots,a_{n+1})\in\C^{n+1}\,|\,a_j=a_{j'}\text{ for all }j,j'\in J\})
\end{split}
\]
for all subsets $J\subseteq [n+1]$ with $|J|\geq 2$. A point $p$ in $\overline{Y}_\cH$ is determined by its images under these morphisms, which we will refer to as the $J$-components of $p$. 
\begin{exam}
When $|J|=3$, the $J$-component of $p$ may be thought of as a point of $\Pp^1\cong\overline{\cM_{0,4}}$. In the terminology of stable curves as found in \cite[Section 2]{etingofetal}, the resulting projection morphism $\overline{\cM_{0,n+2}}\to\overline{\cM_{0,4}}$ is the map of `stably forgetting' all marked points except those labelled by $J\cup\{n+2\}$.
\end{exam}

Let $f:W\to V$ be the $S_n$-equivariant vector space isomorphism whose transpose map $f^*:V^*\to W^*$ sends $x_i$ to $y_i-y_{n+1}$ for all $1\leq i\leq n$. If $A\in\cG$, then clearly $f^*(A)\in\cH$. Thus, in the terminology of Rains, $f:(W,\cH)\to(V,\cG)$ is a morphism of building sets. He observes in \cite[Proposition 2.4]{rains} that the De~Concini--Procesi construction is functorial, so we have a morphism $\overline{Y}_f:\overline{Y}_\cH\to\overline{Y}_\cG=Y_n$, which is surjective and birational by \cite[Proposition 2.7]{rains}. By definition, for any $p\in\overline{Y}_\cH$ and any nonempty subset $I\subseteq [n]$, the $I$-component of $\overline{Y}_f(p)$ is the image of the $(I\cup\{n+1\})$-component of $p$ under the map
\begin{equation} 
\begin{split}
&\C^{n+1}/\{(a_1,\cdots,a_{n+1})\in\C^{n+1}\,|\,a_i=a_{n+1}\text{ for all }i\in I\}\\
&\qquad\qquad\qquad\,\to \C^n/\{(a_1,\cdots,a_n)\in\C^n\,|\,a_i=0\text{ for all }i\in I\}:\\
&(a_1,\cdots,a_{n+1})\mapsto(a_1-a_{n+1},\cdots,a_n-a_{n+1}). 
\end{split}
\end{equation}
Composing the morphism $\overline{Y}_f$ with the isomorphism $\overline{\cM_{0,n+2}}\isomto\overline{Y}_\cH$, we obtain an $S_n$-equivariant surjective birational morphism $\tau:\overline{\cM_{0,n+2}}\to Y_n$, which respects the real structures. We now aim to use $\tau$ to compare these two varieties.

For any ordered $m$-element subset $\{s_1,s_2,\cdots,s_m\}$ of $[n]$, we have a morphism $\phi_{s_1,\cdots,s_m}:Y_n\to Y_m$, analogous to that considered in \cite[Section 2.3]{etingofetal}, which is defined as follows. For any $p\in Y_n$ with $I$-component $[a_i^I]_{i\in I}$ and any nonempty subset $J\subseteq [m]$, the $J$-component of $\phi_{s_1,\cdots,s_m}(p)$ is $[a_{s_j}^{S_J}]_{j\in J}$ where $S_j=\{s_j\,|\,j\in J\}$. It is clear from Proposition \ref{equationprop} that this does define a point of $Y_m$.

As a consequence, we have ring homomorphisms $\phi_{s_1,\cdots,s_m}^*:H^*(Y_m(\R),\Q)\to H^*(Y_n(\R),\Q)$. For any $i\neq j$ in $[n]$, let $\nu_{ij}$ be the image under $\phi_{i,j}^*:H^1(Y_2(\R),\Q)\to H^1(Y_n(\R),\Q)$ of the fundamental class of $Y_2(\R)\cong\Pp^1(\R)$. Since $\phi_{j,i}:Y_n\to\Pp^1$ is the composition of $\phi_{i,j}:Y_n\to\Pp^1$ with the inversion map on $\Pp^1$, we have $\nu_{ji}=-\nu_{ij}$. If $w\in S_n$, it is clear that $w.\nu_{ij}=\nu_{w(i)w(j)}$. 

\begin{prop} \label{h1prop}
The elements $\{\nu_{ij}\,|\,1\leq i<j\leq n\}$ form a basis of $H^1(Y_n(\R),\Q)$.
\end{prop}
\begin{proof}
Corollary \ref{betticor} says that $\dim H^1(Y_n(\R),\Q)=\binom{n}{2}$, so it suffices to show that the elements $\nu_{ij}$ for $i<j$ are linearly independent. For this, it is enough to show that their images $\tau^*(\nu_{ij})$ under the map $\tau^*:H^1(Y_n(\R),\Q)\to H^1(\overline{\cM_{0,n+2}}(\R),\Q)$ are linearly independent. But from the definitions, it follows that the composition $\phi_{i,j}\circ\tau:\overline{\cM_{0,n+2}}\to\Pp^1$ is precisely the map denoted $\phi_{i,j,n+1,n+2}$ in \cite[Section 2.3]{etingofetal}. Hence $\tau^*(\nu_{ij})$ is the element denoted $\omega_{i,j,n+1,n+2}$ there. Since the elements $\omega_{i,j,k,n+2}$ for $i<j<k<n+2$ form a basis of $H^1(\overline{\cM_{0,n+2}}(\R),\Q)$ by \cite[Proposition 2.3, Theorem 2.9]{etingofetal}, the desired linear independence holds.
\end{proof}

However, there is a major difference between the rational cohomology rings $H^*(\overline{\cM_{0,n+2}}(\R),\Q)$ and $H^*(Y_n(\R),\Q)$: the former is generated in degree $1$ by \cite[Theorem 2.9]{etingofetal}, whereas the latter is not (for $n\geq 4$), by the following result. 
\begin{prop} \label{nongenprop}
Assume $n\geq 4$. Let $C$ be the subspace of $H^2(Y_n(\R),\Q)$ spanned by the cup products of elements of $H^1(Y_n(\R),\Q)$. Then $C$ has basis 
\[
\{\nu_{ij}\nu_{kl},\nu_{ik}\nu_{jl},\nu_{il}\nu_{jk}\,|\,1\leq i<j<k<l\leq n\}.
\]
In particular, $\dim C=3\binom{n}{4}<5\binom{n}{4}=\dim H^2(Y_n(\R),\Q)$.
\end{prop}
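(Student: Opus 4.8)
The plan is to first determine all relations satisfied by cup products of the classes $\nu_{ij}$, then use them to produce the asserted spanning set, and finally establish linear independence by transporting the question to $\overline{\cM_{0,n+2}}(\R)$ via $\tau^*$. The formal input is quick: since we use rational coefficients, $H^*(Y_n(\R),\Q)$ is graded-commutative, so any two degree-one classes anticommute. In particular $\nu_{ab}\nu_{cd}=-\nu_{cd}\nu_{ab}$ and $\nu_{ij}^2=0$; combined with $\nu_{ji}=-\nu_{ij}$ (noted before Proposition \ref{h1prop}), this shows that a product $\nu_{ab}\nu_{cd}$ depends, up to sign, only on the unordered pair of index-pairs $\{\{a,b\},\{c,d\}\}$.

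The next step is to show that every product $\nu_{ab}\nu_{cd}$ using at most three distinct indices vanishes. If $\{a,b,c,d\}\subseteq\{i,j,k\}$ then, by the compatibility of the projections $\phi$ under composition, both factors lie in the image of the ring homomorphism $\phi_{i,j,k}^*\colon H^*(Y_3(\R),\Q)\to H^*(Y_n(\R),\Q)$; indeed $\phi_{i,j,k}^*$ carries the three generators $\nu_{12},\nu_{13},\nu_{23}$ of $H^1(Y_3(\R),\Q)$ to $\nu_{ij},\nu_{ik},\nu_{jk}$. But $H^2(Y_3(\R),\Q)=0$ by Corollary \ref{betticor} (since $2>3/2$), so every such product is zero. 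Hence $C$ is spanned by the products with four distinct indices, and by the previous paragraph each four-element subset $\{i<j<k<l\}$ contributes exactly the three products $\nu_{ij}\nu_{kl}$, $\nu_{ik}\nu_{jl}$, $\nu_{il}\nu_{jk}$, one per perfect matching. This yields the claimed spanning set of size $3\binom{n}{4}$; since $\dim H^2(Y_n(\R),\Q)=5\binom{n}{4}$ by Corollary \ref{betticor}, the strict inequality will follow once these products are shown to be linearly independent.

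For independence I would apply the ring homomorphism $\tau^*\colon H^*(Y_n(\R),\Q)\to H^*(\overline{\cM_{0,n+2}}(\R),\Q)$. By the computation in the proof of Proposition \ref{h1prop} we have $\tau^*(\nu_{ij})=\omega_{i,j,n+1,n+2}$, so $\tau^*(\nu_{ab}\nu_{cd})=\omega_{a,b,n+1,n+2}\,\omega_{c,d,n+1,n+2}$, and it suffices to prove that these $3\binom{n}{4}$ products are linearly independent in $H^2(\overline{\cM_{0,n+2}}(\R),\Q)$. For this I would invoke the generators-and-relations description of that ring from \cite{etingofetal}, reducing each product to the monomial basis in degree $2$ and checking that no relation forces a dependence among the specific monomials $\omega_{a,b,n+1,n+2}\,\omega_{c,d,n+1,n+2}$.

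I expect this last step to be the main obstacle. The spanning argument is essentially formal, whereas the independence statement carries the real content: one must control the degree-two (Arnold-type, three-term) relations among the $\omega$'s in \cite{etingofetal} and verify that they neither annihilate nor conflate our monomials. A natural way to organize this is to handle one four-subset at a time—reducing via $\phi_{i,j,k,l}$ to the case $n=4$, where $H^2(Y_4(\R),\Q)$ is five-dimensional and the three products can be checked independent directly inside $H^2(\overline{\cM_{0,6}}(\R),\Q)$—and then to argue that the monomials attached to distinct four-subsets cannot interact, since outside $\{n+1,n+2\}$ their index sets differ. Making this non-interaction precise from the \cite{etingofetal} relations is the technical heart of the proof.
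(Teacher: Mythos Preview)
Your approach is essentially identical to the paper's: spanning via $H^2(Y_3(\R),\Q)=0$ pulled back through $\phi_{i,j,k}^*$, then independence via $\tau^*$ and the results of \cite{etingofetal}. The obstacle you anticipate in the final paragraph is not really there---the paper simply invokes \cite[Proposition~2.3, Theorem~2.9]{etingofetal}, which together furnish an explicit monomial basis for $H^*(\overline{\cM_{0,n+2}}(\R),\Q)$ already containing the products $\omega_{a,b,n+1,n+2}\,\omega_{c,d,n+1,n+2}$ with $a,b,c,d$ distinct, so no reduction to $n=4$ or non-interaction argument is needed.
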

\begin{proof}
Note first that $H^2(Y_3(\R),\Q)=0$ by Corollary \ref{betticor} (or since $Y_3(\R)$ is a non-orientable surface). So in $H^*(Y_3(\R),\Q)$ we have the relation $\nu_{12}\nu_{13}=0$. Applying the map $\phi_{i,j,k}^*$ to this relation, we deduce that $\nu_{ij}\nu_{ik}=0$ in $H^*(Y_n(\R),\Q)$ for any distinct $i,j,k$ in $[n]$ (and, of course, we also have $\nu_{ij}\nu_{ij}=0$ by skew-symmetry of the cup product). Hence $C$ is spanned by the given set. To show the linear independence, we can argue as in Proposition \ref{h1prop}: it is enough to show that
\[
\begin{split}
\{\omega_{i,j,n+1,n+2}\,\omega_{k,l,n+1,n+2},\ &\omega_{i,k,n+1,n+2}\,\omega_{j,l,n+1,n+2},\\
&\omega_{i,l,n+1,n+2}\,\omega_{j,k,n+1,n+2}\,|\,1\leq i<j<k<l\leq n\}
\end{split}
\]
is linearly independent in $H^2(\overline{\cM_{0,n+2}}(\R),\Q)$, and this follows from \cite[Proposition 2.3, Theorem 2.9]{etingofetal}. 
\end{proof}

In the case of $\overline{Y}_\cH\cong\overline{\cM_{0,n+2}}$, the $S_{n+1}$-action which arises naturally from the De~Concini--Procesi construction can be extended to an $S_{n+2}$-action. As a final note, we can deduce from Proposition \ref{nongenprop} that no such extension is possible for $Y_n$ (at least, in a way that respects the real structure).
\begin{cor}
For $n\geq 4$, the $S_n$-action on $Y_n(\R)$ cannot be extended to $S_{n+1}$. 
\end{cor}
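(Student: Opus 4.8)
The plan is to convert a hypothetical geometric extension of the action into an impossible piece of representation theory. Suppose that $S_{n+1}\supseteq S_n$ acted on $Y_n(\R)$ extending the given $S_n$-action (and respecting the real structure). Then $S_{n+1}$ would act on the graded ring $H^*(Y_n(\R),\Q)$ by graded ring automorphisms, so each $H^i(Y_n(\R),\Q)$ becomes a $\Q S_{n+1}$-module and the cup product $H^1\otimes H^1\to H^2$ is $S_{n+1}$-equivariant. In particular the subspace $C$ of Proposition \ref{nongenprop}, being the image of $\wedge^2 H^1(Y_n(\R),\Q)\to H^2(Y_n(\R),\Q)$, would be an $S_{n+1}$-submodule of $H^2(Y_n(\R),\Q)$ and an $S_{n+1}$-equivariant quotient of $\wedge^2 H^1(Y_n(\R),\Q)$. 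I would derive a contradiction purely from these two constraints on $C$.

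The first step is to pin down the $S_{n+1}$-module structure on $H^1(Y_n(\R),\Q)$. By Proposition \ref{h1prop} together with the relation $\nu_{ji}=-\nu_{ij}$, as an $S_n$-module $H^1(Y_n(\R),\Q)\cong\wedge^2\C^n\cong S^{(n-1,1)}\oplus S^{(n-2,1,1)}$, where $\C^n$ is the permutation module and $S^\lambda$ denotes the irreducible indexed by $\lambda$. A short check with the branching rule shows that for $n\geq 4$ the only $\Q S_{n+1}$-module restricting to $S^{(n-1,1)}\oplus S^{(n-2,1,1)}$ is the irreducible $S^{(n-1,1,1)}$, which is $\wedge^2 U$ for $U$ the $n$-dimensional standard module of $S_{n+1}$. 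Thus the hypothetical $S_{n+1}$-action on $H^1$ is forced, and $\wedge^2 H^1\cong\wedge^2(\wedge^2 U)\cong \wedge^4 U\oplus\mathbb{S}_{(2,1,1)}(U)$ via the plethysm $e_2[e_2]=s_{(1,1,1,1)}+s_{(2,1,1)}$.

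Next I would identify $C$ as an $S_n$-module. Since $\nu_{ij}\nu_{ik}=0$ for distinct $i,j,k$, the space $C$ is spanned by the products $\nu_{ij}\nu_{kl}$ with $\{i,j\}\cap\{k,l\}=\emptyset$, which group according to the underlying $4$-element subset; for a fixed $4$-subset the three such products span the $S_4$-representation on perfect matchings, twisted by the signs $\nu_{ji}=-\nu_{ij}$. A one-line character computation identifies this $3$-dimensional $S_4$-module as $S^{(2,1,1)}$, so $C\cong\Ind_{S_4\times S_{n-4}}^{S_n}(S^{(2,1,1)}\boxtimes 1_{S_{n-4}})$, which by Pieri's rule decomposes as $S^{(n-2,1,1)}\oplus S^{(n-3,2,1)}\oplus S^{(n-3,1,1,1)}\oplus S^{(n-4,2,1,1)}$ (with fewer constituents when $n\in\{4,5\}$).

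Finally I would play the two constraints against each other. Running the branching rule backwards, one checks that for $n=4$ and for $n\geq 6$ the multiset of $S_n$-constituents of $C$ cannot be realised as the restriction of any $\Q S_{n+1}$-module at all: the only $S_{n+1}$-irreducibles whose restrictions stay among the constituents of $C$ are $S^{(n-2,1,1,1)}$ and $S^{(n-3,2,1,1)}$, and both contain $S^{(n-3,1,1,1)}$ on restriction, so no combination of them restricts to $C$. The single remaining case $n=5$ is where the quotient constraint becomes essential: there $C$ does admit an abstract $S_6$-structure, necessarily $S^{(3,1,1,1)}\oplus S^{(2,2,2)}$, but a direct character computation shows that $S^{(2,2,2)}$ does not occur in $\wedge^2 H^1\cong\wedge^2 S^{(4,1,1)}$, so $C$ cannot be an $S_6$-equivariant quotient of $\wedge^2 H^1$. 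Either way we reach a contradiction, proving the corollary. I expect the main obstacle to be exactly this last, representation-theoretic bookkeeping: establishing the uniqueness of the $H^1$-extension and, above all, ruling out the borderline case $n=5$, which forces one to use not merely the abstract isomorphism type of $C$ but the finer fact that $C$ is a quotient of $\wedge^2 H^1$.
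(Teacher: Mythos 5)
Your overall strategy is the paper's: reduce to showing that the $S_n$-module $C$ of Proposition \ref{nongenprop}, which would be an $S_{n+1}$-submodule of $H^2$ under a hypothetical extended action, is obstructed by the branching rule. For $n=4$ and $n\geq 6$ your argument and the paper's coincide. But you have gone genuinely beyond the paper at $n=5$, and in fact you have exposed a gap in the paper's own proof: the paper asserts that ``no sum of $\Q S_{n+1}$-modules $V_\lambda$ can possibly restrict to give $C$,'' yet for $n=5$ one has $C\cong V_{(3,1,1)}\oplus V_{(2,2,1)}\oplus V_{(2,1,1,1)}$, and the $\Q S_6$-module $V_{(3,1,1,1)}\oplus V_{(2,2,2)}$ restricts to exactly this (since $(3,1,1,1)$ has corners only in rows $1$ and $4$, and $(2,2,2)$ has a single corner). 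So the purely restriction-theoretic obstruction fails there, and your extra input is genuinely needed: the uniqueness of the $S_6$-structure $V_{(4,1,1)}$ on $H^1$, and the fact that $C$ must be an $S_6$-equivariant quotient of $\wedge^2 H^1$, combined with the (correct, and verifiable by a character computation) fact that $V_{(2,2,2)}$ does not occur in $\wedge^2 V_{(4,1,1)}$. This closes the case the paper's argument does not.

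One small correction: the plethysm you quote is wrong. One has $e_2[e_2]=s_{(2,1,1)}$ (and $h_2[e_2]=s_{(2,2)}+s_{(1,1,1,1)}$), so $\wedge^2(\wedge^2 U)$ is the single Schur functor $\mathbb{S}_{(2,1,1)}(U)$ rather than $\wedge^4U\oplus\mathbb{S}_{(2,1,1)}(U)$; a dimension count with $\dim U=4$ ($15$ versus $16$) already shows this. Fortunately this misstatement is not load-bearing, since the step you actually rely on is the direct computation that $\langle\wedge^2\chi_{(4,1,1)},\chi_{(2,2,2)}\rangle=0$ in $S_6$, which is correct.
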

\begin{proof}
It suffices to show that the representation of $S_n$ on the subspace $C$ in Proposition \ref{nongenprop} cannot be extended to $S_{n+1}$. We use the standard parametrization of irreducible $\Q S_n$-modules by partitions of $n$. It is immediate from the given basis that
$C\cong\Ind_{S_4\times S_{n-4}}^{S_n}(V_{(2,1,1)}\boxtimes 1_{S_{n-4}})$. Hence by the Pieri rule,
\begin{equation} \label{ceqn}
C\cong \begin{cases}
V_{(2,1,1)},&\text{ if $n=4$,}\\
V_{(3,1,1)}\oplus V_{(2,2,1)}\oplus V_{(2,1,1,1)},&\text{ if $n=5$,}\\
V_{(n-2,1,1)}\oplus V_{(n-3,2,1)}\oplus V_{(n-3,1,1,1)}\oplus V_{(n-4,2,1,1)},&\text{ if $n\geq 6$.}
\end{cases}
\end{equation}
Now the $\Q S_{n+1}$-module $V_\lambda$, for $\lambda$ a partition of $n+1$, restricts to give the $\Q S_n$-module $\bigoplus_\mu V_\mu$ where $\mu$ runs over all partitions of $n$ whose diagram is contained in that of $\lambda$. It is easy to see from \eqref{ceqn} that no sum of $\Q S_{n+1}$-modules $V_\lambda$ can possibly restrict to give $C$.
\end{proof}
\textbf{Acknowledgements.} This paper was conceived in June 2010 during the program `Configuration Spaces: Geometry, Combinatorics and Topology' at the Centro di Ricerca Matematica Ennio De Giorgi in Pisa. I thank the organizers for their generous support. I am also indebted to Gus Lehrer for helpful conversations about these matters.


\begin{thebibliography}{99}


\bibitem{dp} C.~{De~Concini} and C.~Procesi, {\it Wonderful models of subspace
  arrangements}, Selecta Math.\ (N.S.) \textbf{1} (1995), 459--494.


\bibitem{dps} F.~De~Mari, C.~Procesi and M.~A.~Shayman,
{\it Hessenberg varieties}, Trans.\ Amer.\ Math.\ Soc.\ \textbf{332} (1992), no.~2, 529--534.


\bibitem{etingofetal} P.~Etingof, A.~Henriques, J.~Kamnitzer, and E.~M.~Rains,
  {\it The cohomology ring of the real locus of the moduli space of stable
  genus $0$ curves with marked points}, Ann.\ of Math.\ (2) \textbf{171} (2010),
  no.~2, 731–-777.

\bibitem{fulton} W.~Fulton, {\it Introduction to Toric Varieties},
Annals of Mathematics Studies 131, 
Princeton University Press, Princeton, NJ, 1993.

\bibitem{gaiffi} G.~Gaiffi, {\it Real structures of models of arrangements},
  Int.\ Math.\ Res.\ Not.\ \textbf{2004} (2004), no.~64, 3439--3467.

\bibitem{henderson1} A.~Henderson, {\it Representations of wreath products on cohomology of De~Concini--Procesi compactifications}, Int.\ Math.\ Res.\ Not. \textbf{2004} (2004), no.~20, 983--1021.

\bibitem{henderson2} A.~Henderson, {\it Plethysm for wreath products and homology of
  sub-posets of Dowling lattices}, Electron.\ J.\ Combin.\ \textbf{13} (2006),
  no.~1, Research Paper 87, 25 pp.

\bibitem{hendersonlehrer} A.~Henderson and G.~Lehrer, {\it The equivariant Euler characteristic of real Coxeter toric varieties}, Bull.\ London Math.\ Soc.\ \textbf{41} (2009), no.~1, 515--523.

\bibitem{hendersonrains} A.~Henderson and E.~Rains, {\it The cohomology of
real De~Concini--Procesi models}, Int.\ Math.\ Res.\ Not.\ IMRN \textbf{2008} (2008),
no.~7, Art.\ ID rnn001, 29 pp.



\bibitem{lehrer} G.~I.~Lehrer, {\it Rational points 
and Coxeter group actions on the cohomology of toric varieties},
Ann.\ Inst.\ Fourier (Grenoble) \textbf{58} (2008), no.~2, 671--688.


\bibitem{procesi} C.~Procesi, {\it The toric variety associated to Weyl chambers},
in {\it Mots}, Lang.\ Raison.\ Calc., Herm\`es, Paris, 1990, 153--161.


\bibitem{rains} E.~M.~Rains, {\it The homology of real subspace arrangements}, 
J.\ Topol., published online October 2010, doi:10.1112/jtopol/jtq027, 33 pp.



\bibitem{stembridge1} J.~R.~Stembridge, {\it Eulerian numbers, tableaux, and the Betti numbers of a toric variety}, Discrete Math.\ \textbf{99} (1992), 307--320.

\bibitem{stembridge2} J.~R.~Stembridge, {\it Some permutation 
representations of Weyl groups associated with the 
cohomology of toric varieties},
Adv.\ Math.\ \textbf{106} (1994), no.~2, 244--301.

\bibitem{wachs} M.~L.~Wachs, {\it Poset topology: tools and applications}, 
  in {\it Geometric Combinatorics},
  IAS/Park City Mathematics Series Vol.~13, American Mathematical Society, 2007, 497--615.


\end{thebibliography}
\end{document}